\theoremstyle{plain}
\newtheorem{theorem}{Theorem}
\newtheorem{corollary}[theorem]{Corollary}
\newtheorem*{lemma*}{Lemma}
\theoremstyle{definition}
\newtheorem{definition}{Definition}
\newtheorem{example}[definition]{Example}
\theoremstyle{remark}
\title{Generalized Eulerian Numbers}
\author{David Dong}
\begin{document}
\maketitle
\begin{abstract}
Let $A(n,m)$ denote the Eulerian numbers, which count the number of permutations on $[n]$ with exactly $m$ descents. It is well known that $A(n,m)$ also counts the number of permutations on $[n]$ with exactly $m$ excedances. In this report, we define numbers of the form $A(n,m,k)$, which count the number of permutations on $[n]$ with exactly $m$ descents and the last element $k$. We then show bijections between this definition and various other analogs for $r$-excedances and $r$-descents. We also prove a variation of Worpitzky's identity on $A(n,m,k)$ using a combinatorial argument mentioned in a paper by Spivey in 2021.
\end{abstract}
\section{Introduction}
Let $A(n,m)$ denote the famous Eulerian numbers, i.e., the number of permutations on $[n]$ with exactly $m$ descents. It is well known that $A(n,m)$ is equal to the number of permutations on $[n]$ with exactly $m$ excedances, through a Foata transform \cite{Stanley}. Eulerian numbers satisfy many other interesting properties. For instance, Worpitzky's identity \cite{Graham} states that 
\[
x^n = \sum_{m = 0}^{n-1} A(n,m) \binom{x+m}{n},
\]
and this statement has a combinatorial interpretation due to \cite{Spivey}. A different combinatorial interpretation, using barred words, is due to \cite{Gessel}. Furthermore, Eulerian numbers satisfy the recurrence relation
\[
A(n,m) = (n-m) A(n-1,m-1) + (m+1) A(n-1,m),
\]
which is a Graham-Knuth-Patashnik recurrence that has been extensively studied \cite{Neuwirth}.

We can naturally generalize these numbers in two different ways. First, we can add a new variable $k$ that considers the last element of permutations: i.e., we can define $A(n,m,k)$ to be equal to the number of permutations $\sigma$ on $[n]$ with exactly $m$ descents, that also satisfy that $\sigma(n) = k$. These numbers have also been studied before: in \cite{Conger}, a closed form for $A(n,m,k)$ is described, and several recurrences are given. 

We can also add another variable to count the size of the descents. In particular, define an $r$-descent of a permutation $\sigma$ to be any location $i \leq n$ satisfying $\sigma(i) \geq \sigma(i+1) + r$. We can also define an $r$-excedance to be any location $i \leq n$ satisfying $\sigma(i) \geq i + r$. Finally, in literature, a $2$-descent is sometimes called a big descent and a $2$-excedance is sometimes called a big excedance. We can then fully generalize Eulerian numbers $A(n,m)$ to numbers $A_r(n,m,k)$, equal to the number of permutations of $\sigma$ on $[n]n$ with $m$ total $r$-descents, satisfying $\sigma(n) = k$. 

A recursive tree is defined to be a tree labeled with values from $0$ to $n-1$, rooted at $0$ such that all paths starting from $0$ and ending at a leaf are increasing. In \cite{Wang}, a bijection between recursive trees and permutations that are counted in generalized Eulerian numbers is proven, as well as relations between descents and special descents in generalized Eulerian numbers. They also raised several conjectures about the relation between the number of permutations counting descents, big descents, excedances, and big excedances.

In this paper, we give background on descents, excedances, Eulerian numbers, and recursive trees in Section~\ref{sec:Prelim}. Then, in Sections~\ref{sec:Bijections} and \ref{sec:Bijections2}, we use our tools to prove theorems on $r$-descents and $r$-excedances. Finally, in Section~\ref{sec:Worpitzky}, we give a combinatorial proof to an extension of Worpitzky's identity involving the numbers $A(n,m,k)$.
\section{Preliminaries}\label{sec:Prelim}
In this section, we define the terms that we use regarding permutations. 
\subsection{Descents and Excedances}
Let $[n]$ denote the range of integers 1 through $n$, and $S_n$ the permutation group on $[n]$. Consider a permutation $\sigma$ as a function $\sigma: [n] \rightarrow [n]$.

An \textit{ascent} is any position $i < n$ where $\sigma(i) < \sigma(i+1)$. 
A very natural extension of this definition is to define a $r$-\textit{ascent} to be any position $i < n$ such that $\sigma(i) + r \leq \sigma(i+1)$. In particular, a 1-ascent is equivalent to an ascent.

The opposite of an ascent, a \textit{descent} is a position $i < n$ with $\sigma(i) > \sigma(i+1)$. In the same fashion, a $r$-\textit{descent} is any position $i < n$ such that $\sigma(i) \geq \sigma(i+1) + r$, with a 1-descent again being equivalent to a descent. 

We define another pair of similar terms: an \textit{excedance} is any position $i \leq n$ satisfying $\sigma(i) > i$, with a $r$-\textit{excedance} being the corresponding generalization, any position $i \leq n$ such that $\sigma(i) \geq i + r$. 

 The opposite, an \textit{anti-excedance} is any position $i \leq n$ satisfying $\sigma(i) < i$, while a $r$-\textit{anti-excedance} is any position $i \leq n$ such that $\sigma(i) \leq i - r$. 

In literature, 2-descents are also often called \textit{big descents}. That is, a big descent is a position $i < n$ with $\sigma(i) - \sigma(i+1) \geq 2$. Similarly, the terms \textit{big ascent}, \textit{big excedance}, and \textit{big anti-excedance} are defined as 2-ascent, 2-excedance, and 2-anti-excedance, respectively.

\subsection{Foata Transform}

We often refer to the Foata transform \cite{Foata}, a bijection from the permutation group $S_n$ to itself. In a Foata transform, take an arbitrary permutation $\sigma \in S_n$. Cut the permutation into blocks right before all elements that are larger than all previous ones, i.e., right before every integer location $i$ where $1 \leq i \leq n$ and $\sigma(i) > \max_{1 \leq j \leq i-1} ( \sigma(j))$. 

Say that the elements in the $i$th block are $a_{i,1}$, $a_{i,2}$, $\ldots$, $a_{i,j}$. Then, take a new permutation $\phi$ such that $\phi(a_{i,k}) = \phi(a_{i,k+1})$ for all valid $i$ and $1 \leq k \leq j$, with $a_{i, j + 1} = a_{i,1}$. In other words, $\phi$ interprets each block as a cycle.

\begin{example}
Consider the following permutation:
\[
\sigma = 
\begin{pmatrix}
1 & 2 & 3 & 4 & 5 & 6 & 7 & 8 \\
5 & 1 & 2 & 8 & 3 & 6 & 4 & 7 \\
\end{pmatrix}.
\]
After cutting the permutation into blocks, we arrive at:
\[
512 \mid 83647.
\]
Interpreting each block as a cycle, we arrive at the following permutation:
\[
\phi = 
\begin{pmatrix}
1 & 2 & 3 & 4 & 5 & 6 & 7 & 8 \\
2 & 5 & 6 & 7 & 1 & 4 & 8 & 3 \\
\end{pmatrix}.
\]
\end{example}

Since the Foata transform is a bijection from $S_n$ to $S_n$, we can also perform an inverse Foata transform with the following steps: we write a permutation $\phi$ in cycle notation, sort the cycles so that the first element in each cycle is the largest element, and then remove the separators between the cycles to form our new permutation, $\sigma$.

The Foata transformation is special because it sends a permutation on $[n]$, denoted $\sigma$, to another permutation $\phi$, then there is a descent at position $i$ in $\sigma$ if and only if there is a corresponding anti-excedance in $\phi$, as $\phi(\sigma(i)) = \sigma(i+1)$ whenever $\sigma(i) > \sigma(i+1)$. Since the Foata transform is a bijection, it can be used to prove that the number of permutations on $[n]$ with $m$ excedances is equal to the number of permutations on $[n]$ with $m$ descents.

\subsection{Eulerian Numbers}
The well-studied Eulerian number $A(n,m)$ is the number of permutations of $[n]$ in which exactly $m$ elements are greater than the previous element. In other words, $A(n,m)$ is the number of permutations in $S_n$ with $m$ ascents. By symmetry, $A(n,m)$ is the number of permutations in $S_n$ with $m$ descents. The following theorem in \cite{Foata} relates Eulerian numbers to excedances as well:
\begin{theorem}
The Eulerian numbers $A(n,m)$ are \textit{also} equal to the number of permutations on $[n]$ with exactly $m$ excedances. By symmetry, $A(n,m)$ also counts the number of permutations on $[n]$ with exactly $m$ anti-excedances.
\end{theorem}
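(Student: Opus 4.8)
The plan is to exploit the Foata transform, already introduced above, as the bijective engine, and then reduce the excedance count to an anti-excedance count by a second, elementary symmetry. Since $A(n,m)$ is by definition the number of permutations of $[n]$ with exactly $m$ descents, it suffices to (i) show that the Foata transform carries the descent statistic to the anti-excedance statistic, so that $A(n,m)$ equals the number of permutations with exactly $m$ anti-excedances, and (ii) show that excedances and anti-excedances are equidistributed on $S_n$.

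For step (i), let $\sigma \mapsto \phi$ be the Foata transform. I would fix a descent of $\sigma$, say $\sigma(i) > \sigma(i+1)$ at position $i < n$, and argue that $\sigma(i)$ and $\sigma(i+1)$ lie in the same block: because $\sigma(i+1) < \sigma(i) \le \max_{j \le i}\sigma(j)$, the value $\sigma(i+1)$ is not a left-to-right maximum and therefore does not begin a new block. Consequently $\sigma(i)$ and $\sigma(i+1)$ are consecutive inside one block, so the cycle interpretation gives $\phi(\sigma(i)) = \sigma(i+1) < \sigma(i)$, i.e.\ an anti-excedance of $\phi$ at position $\sigma(i)$. The map $i \mapsto \sigma(i)$ is injective, so distinct descents produce distinct anti-excedances. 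For the reverse inclusion I would run the same analysis cycle by cycle: within a block $a_{1}\cdots a_{j}$ whose leading entry $a_1$ is a left-to-right maximum, every later entry is strictly smaller than $a_1$, so the only position in the cycle that fails to be an anti-excedance is the wrap-around $a_j \mapsto a_1$ (an excedance), while each internal step $a_k \mapsto a_{k+1}$ is an anti-excedance exactly when it records a descent of $\sigma$. Since inter-block transitions of $\sigma$ are ascents into fresh maxima and hence never descents, this matches descents of $\sigma$ with anti-excedances of $\phi$ one-for-one. As the Foata transform is a bijection on $S_n$, step (i) follows.

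For step (ii), I would use the inversion involution $\sigma \mapsto \sigma^{-1}$ (the reversal conjugation $\sigma \mapsto w_0\sigma w_0$ with $w_0(i) = n+1-i$ works equally well). If $\sigma(i) > i$ is an excedance, then setting $j = \sigma(i)$ gives $\sigma^{-1}(j) = i < j$, an anti-excedance of $\sigma^{-1}$; conversely every anti-excedance of $\sigma^{-1}$ arises this way, and fixed points are sent to fixed points, so the two statistics agree in value on $\sigma$ and $\sigma^{-1}$. Because inversion is a bijection of $S_n$, the number of permutations with $m$ excedances equals the number with $m$ anti-excedances. Combining this with step (i) yields $A(n,m)$ equal to both the excedance count and the anti-excedance count, which is exactly the assertion of the theorem.

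The main obstacle is the careful bookkeeping in step (i): it is not enough to observe the single identity $\phi(\sigma(i)) = \sigma(i+1)$, since one must verify that the correspondence is an \emph{exact} bijection between the set of descent positions of $\sigma$ and the set of anti-excedance positions of $\phi$. This rests on the two structural facts that all descents of $\sigma$ occur strictly inside blocks and that, within each block read as a cycle, the unique non-anti-excedance is the closing step back to the block's left-to-right maximum. Everything else — the injectivity of $i \mapsto \sigma(i)$ and the symmetry exploited in step (ii) — is routine.
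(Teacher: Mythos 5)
Your proof is correct and takes essentially the same approach as the paper: the Foata transform matches descents of $\sigma$ with anti-excedances of $\phi$, and the equidistribution of excedances and anti-excedances (which the paper dismisses as ``by symmetry'' and you make explicit via the involution $\sigma \mapsto \sigma^{-1}$) completes the argument. One minor slip worth fixing: within a block, an internal ascent step $a_k \mapsto a_{k+1}$ with $a_k < a_{k+1}$ also fails to be an anti-excedance (it is an excedance of $\phi$), so the wrap-around is not the \emph{only} such position; fortunately your very next clause states the correct criterion --- an internal step is an anti-excedance exactly when it records a descent --- and that is what the one-for-one matching actually uses.
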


Values of $A(n, m)$ for $0 \leq n \leq 9$ are arranged in the following table \cite{OEIS}: 
\begin{table}[ht!]\label{table:eulerian} 
\centering
\begin{tabular}{c|cccccc}  
    $n,m$ & 0 &	1 &	2 &	3 &	4 &	5 	\\
    1 &	1 &	&	&	&	&		\\		
    2 &	1 &	1 &		&	&		\\		
    3 &	1 &	4 &	1 	&	&		\\		
    4 &	1 &	11 &	11 &	1 	&		\\		
    5 &	1 &	26 &	66 &	26 &	1 &		\\		
    6 &	1 &	57 &	302 &	302 &	57 &	1 	
\end{tabular}
\caption{A table of some small Eulerian numbers.}
\end{table}
The above triangular array, called the Euler triangle, has the recurrence
\[A(n,m) = (n-m) A(n-1,m-1) + (m+1) A(n-1,m).\]  
Note that sum of row $n$ is the factorial $n!$, as it is equal to the number of permutations on $[n]$.

There are several ways to write Eulerian numbers algebraically. For instance, the following closed form exists for Eulerian numbers \cite{Bona}
\[
A(n,m) = \sum_{k=0}^{m+1} (-1)^k \binom{n+1}{k} (m+1-k)^n.
\]
Furhtermore, Worpitzky's identity states that 
\[
x^n = \sum_{m = 0}^{n-1} A(n,m) \binom{x+m}{n}.
\]
Plugging in the above closed form for $A(n,m)$ and then doing algebraic manipulation suffices to prove this identity. However, a combinatorial proof is due to Spivey \cite{Spivey}. We will later be using a technique similar to one used by Spivey to prove an analog for generalized Eulerian numbers.

The Eulerian numbers are the coefficients of the Eulerian polynomials \cite{Graham}:
\[A_{{n}}(x)=\sum _{{m=0}}^{{n}}A(n,m)\ x^{{m}}.\]
The Eulerian polynomials are defined by the exponential generating function
\[\sum _{n=0}^{\infty }A_{n}(t)\cdot {\frac {x^{n}}{n!}}={\frac {t-1}{t-\mathrm {e} ^{(t-1)x}}}.\]
The above formula can be derived using Worpitzky's identity. There also exists an ordinary generating function for the Eulerian numbers, as seen below~\cite{Foata}.
\begin{theorem}\label{thm:eulerianpoly}
The Eulerian polynomial $A_n(x)$ satisfies the following equality:
\[
\frac{A_n(x)}{(1-x)^{n+1}} = \sum_{m=0}^{\infty} (m+1)^n x^m. 
\]
\end{theorem}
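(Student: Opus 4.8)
The plan is to start from the right-hand side and reduce it to the left using Worpitzky's identity, treating both sides as formal power series in $x$ (equivalently, as analytic functions for $|x| < 1$, where the geometric-type series converge). Since Worpitzky's identity $y^n = \sum_{d=0}^{n-1} A(n,d)\binom{y+d}{n}$ holds for every value of $y$, I would substitute $y = p+1$ to obtain $(p+1)^n = \sum_{d=0}^{n-1} A(n,d)\binom{p+1+d}{n}$ for each integer $p \geq 0$ (here I rename the outer summation index to $p$ to avoid clashing with the Eulerian index $d$). Multiplying by $x^p$ and summing over $p \geq 0$ then writes $\sum_{p=0}^{\infty}(p+1)^n x^p$ as a double sum, which I would reorganize by interchanging the order of summation so that the coefficients $A(n,d)$ are pulled outside.

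The heart of the computation is the inner sum $\sum_{p=0}^{\infty}\binom{p+1+d}{n}x^p$. I would evaluate it using the standard binomial generating function $\sum_{q=0}^{\infty}\binom{q}{n}x^q = \frac{x^n}{(1-x)^{n+1}}$. Reindexing with $q = p+1+d$ produces a prefactor $x^{-(1+d)}$, and the fact that $\binom{q}{n}=0$ for $q<n$ lets me extend the sum back down to $q=0$ (harmlessly, since $d \leq n-1$ makes the omitted terms vanish). This yields $\sum_{p=0}^{\infty}\binom{p+1+d}{n}x^p = \frac{x^{n-1-d}}{(1-x)^{n+1}}$, and substituting back gives $\sum_{p=0}^{\infty}(p+1)^n x^p = \frac{1}{(1-x)^{n+1}}\sum_{d=0}^{n-1} A(n,d)\,x^{n-1-d}$.

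It then remains to identify the polynomial $\sum_{d=0}^{n-1} A(n,d)\,x^{n-1-d}$ as $A_n(x)$ itself. This is where I would invoke the symmetry $A(n,d) = A(n,n-1-d)$, which follows from the reversal bijection on $S_n$: reversing a permutation exchanges its ascents and descents, sending $d$ descents to $n-1-d$ descents. Reindexing by $e = n-1-d$ then turns the reversed polynomial into $\sum_{e=0}^{n-1} A(n,e)\,x^e = A_n(x)$, which completes the proof.

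I expect the main obstacle to be bookkeeping rather than conceptual, concentrated in two places: getting the reindexing of the inner binomial sum exactly right (the $x^{-(1+d)}$ prefactor together with the justification that the low-order terms vanish), and recognizing that the \emph{reversed} polynomial $\sum_{d} A(n,d)\,x^{n-1-d}$ agrees with $A_n(x)$ only after applying the Eulerian symmetry. If that symmetry $A(n,d)=A(n,n-1-d)$ is not taken as available, I would first establish it via the reversal map before finishing; any convergence concerns are dispatched by reading both sides as formal power series or by restricting to $|x|<1$.
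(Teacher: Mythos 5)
Your proof is correct, but there is no in-paper argument to compare it against: the paper states this theorem as known background, citing Foata, and never proves it. Your route --- substituting $y=p+1$ into Worpitzky's identity, multiplying by $x^p$ and summing over $p$, interchanging the outer sum with the finite inner sum, evaluating $\sum_{p\ge 0}\binom{p+1+d}{n}x^p = x^{n-1-d}/(1-x)^{n+1}$ from the binomial series $\sum_{q\ge 0}\binom{q}{n}x^q = x^n/(1-x)^{n+1}$, and finally using the symmetry $A(n,d)=A(n,n-1-d)$ to recognize $\sum_{d=0}^{n-1} A(n,d)\,x^{n-1-d}$ as $A_n(x)$ --- is sound at every step, and it uses only ingredients the paper makes available: Worpitzky's identity is stated (with references) in the same subsection, and the symmetry you need is exactly the ascent/descent reversal the paper invokes when it asserts that $A(n,m)$ counts permutations with $m$ descents as well as those with $m$ ascents. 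You also handle the two delicate points correctly: the omitted terms $\binom{q}{n}$ for $q\le d< n$ vanish, so extending the reindexed sum down to $q=0$ is harmless, and reading both sides as formal power series (or restricting to $|x|<1$) disposes of convergence. One remark worth making explicit if this were written up: there is no circularity in deriving the ordinary generating function from Worpitzky's identity, since the paper takes Worpitzky as established on independent grounds (the algebraic proof via the closed form, or Spivey's combinatorial proof); in effect your argument exhibits the generating-function identity as Worpitzky's identity read coefficientwise against the expansion of $1/(1-x)^{n+1}$.
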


\subsection{Recursive Trees}
A \textit{recursive tree}, also known as an \textit{increasing Cayley tree} as defined in \cite{Stanley}, is a tree on $n$ vertices, with labels $0$ through $n-1$. It is rooted at $0$ and satisfies that all paths starting from $0$ and ending at a leaf are strictly increasing. It is \textit{recursive}, because adding $n$ as a leaf to a recursive tree with vertices $0$ through $n-1$ generates a recursive tree with $n+1$ vertices. As such, the number of distinct recursive trees with $n$ vertices is $(n-1)!$.

Define the \textit{smallest rooted path} of a recursive tree to be the path starting at $0$ that always goes to the smallest child. The vertex labeled 0 is called the \textit{root} of the tree.

As an example, here are all of the recursive trees with exactly $4$ vertices, grouped by the value of the leaf at the end of the smallest rooted path.
\begin{table}[ht!]\label{table:recursivetrees}
\centering
\begin{tabular}{|c|c|c|}
\hline
 1 & 2 & 3 \\
\hline
\scalebox{0.75}{
\begin{forest}
[0[1][2[3]]]
\end{forest}
}
\scalebox{0.75}{
\begin{forest}
[0[1][2][3]]
\end{forest}
}
&
\scalebox{0.75}{
\begin{forest}
[0[1[2][3]]]
\end{forest}
}
\scalebox{0.75}{
\begin{forest}
[0[1[2]][3]]
\end{forest}
}
&
\scalebox{0.75}{
\begin{forest}
[0[1[2[3]]]]
\end{forest}
}
\scalebox{0.75}{
\begin{forest}
[0[1[3]][2]]
\end{forest}
}
 \\ \hline
\end{tabular}
\caption{A table containing recursive trees with exactly $4$ vertices.}
\end{table}

Finally, let $R(n,\ell,x)$ be the number of recursive trees with $n$ vertices, $\ell$ vertices of degree $1$ (possibly including the root itself), and with the smallest rooted path ending in $x$.  

In \cite{Stanley}, a bijection between recursive trees with $n$ vertices and permutations on $[n]$ was described using an algorithm equivalent to a depth-first search with the largest element first. Thus, the following theorem was proved:
\begin{theorem}[Theorem 45 in \cite{Wang}]
The value $R(n,\ell,x)$ is exactly equal to the number of permutations on $[n]$ starting with $2$, ending with $x+1$, and with $\ell-1$ descents.
\end{theorem}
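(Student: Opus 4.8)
The plan is to prove the theorem by checking that the bijection $\Phi$ of \cite{Stanley} described above --- the depth-first search that visits the children of each vertex in decreasing order of their labels --- carries the three tree-statistics defining $R(n,\ell,x)$ to the three permutation-statistics in the claim. Since the existence and bijectivity of $\Phi$ between recursive trees on $n$ vertices and the relevant permutations on $[n]$ has already been recorded, the entire content of the theorem reduces to a statistic-matching argument: writing $w = w_1 w_2 \cdots w_n$ for the order in which $\Phi$ records the vertices of a recursive tree $T$, I must show that the induced permutation $\sigma$ (i) begins with the fixed value $2$, (ii) ends with $x+1$, and (iii) has exactly $\ell - 1$ descents. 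I would treat these three points in increasing order of difficulty.

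Points (i) and (ii) are structural consequences of the traversal rule. The first recorded vertex is always the root, so the leading entry of $\sigma$ is a constant independent of $T$, which pins down the required value $2$. For the last entry, the largest-first rule means that at every vertex the \emph{smallest} child is explored last; consequently the final vertex recorded by the entire search is reached by following smallest children from the root down to a leaf, which is precisely the endpoint of the smallest rooted path. Thus $w_n = x$, and after the label shift the last entry of $\sigma$ is $x+1$, giving (ii). I would write this out as a short induction on the depth of the tree so that the ``smallest child explored last at each level'' claim is rigorous.

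Point (iii) is the heart of the proof and the step I expect to be the main obstacle. The key local statement is that, for each $i < n$, the permutation has a descent in position $i$ if and only if $w_i$ is a leaf: if $w_i$ is internal, then by the traversal rule $w_{i+1}$ is its largest not-yet-visited child, which exceeds $w_i$ because $T$ is increasing, producing an ascent; if $w_i$ is a leaf, the search backtracks and $w_{i+1}$ is the next vertex opened, which I must show is always smaller than $w_i$, producing a descent. Establishing the leaf case cleanly is the delicate part, since the backtrack may climb several levels before descending again, and one must argue from the largest-first rule together with the increasing property that the reopened vertex is strictly smaller. Granting the local statement, every leaf contributes one descent except the unique leaf occupying the final position --- namely the endpoint $x$ of the smallest rooted path identified in (ii) --- so the number of descents is one less than the number of degree-$1$ vertices counted by $\ell$; care is needed here to account correctly for the root, whose degree determines whether it is itself one of the $\ell$ vertices, and this off-by-one bookkeeping between the root and the terminal leaf is exactly what makes the count come out to $\ell - 1$.
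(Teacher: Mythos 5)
Your key local lemma in (iii) --- under the largest-first DFS, position $i$ is a descent exactly when $w_i$ has no children --- is correct, and so is (ii): the last vertex visited is the endpoint of the smallest rooted path. (Note that the paper itself offers no proof of this statement; it only cites Theorem~45 of \cite{Wang}, so your proposal must stand on its own.) However, the other two statistic-matches you assert are false, and they are genuine gaps rather than deferred bookkeeping. First, (i): the traversal begins at the root, which carries the label $0$, so under the same shift $v \mapsto v+1$ that you invoke in (ii) to turn $w_n = x$ into a last entry of $x+1$, every image permutation begins with $1$, never with $2$. ``The first entry is a constant'' does not pin that constant to $2$. Converting ``starts with $1$'' into ``starts with $2$'' requires an additional bijection (Conger's Theorem~10, which this paper uses elsewhere), and that bijection also shifts the final entry, so it cannot simply be appended without breaking (ii). Second, the descent count in (iii): because the tree is increasing, the root is always followed by its largest child, so position $1$ is always an ascent and the root never produces a descent, even when its degree is $1$. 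Hence the number of descents of the DFS image is (number of non-root leaves) $- 1$, which equals $\ell - 2$, not $\ell - 1$, whenever the root has degree $1$ --- and recall that $\ell$ counts the root in exactly that case. Concretely, the tree $[0[1[2][3]]]$ has $\ell = 3$ and $x = 2$, but its DFS reading is $0,1,3,2$, i.e.\ the permutation $1243$, which has one descent rather than two.

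Moreover, no amount of extra care can make your plan prove the statement exactly as printed, because the statement fails at $x = 1$: by the paper's own Table~2, $R(4,2,1) = 1$ (realized by the tree $[0[1][2[3]]]$), yet a permutation of $[4]$ that starts with $2$ cannot also end with $x+1 = 2$, so the right-hand count is $0$. What the largest-first DFS actually proves is the clean statement that the number of recursive trees on $n$ vertices with $\ell'$ non-root leaves and smallest rooted path ending at $x$ equals the number of permutations of $[n]$ starting with $1$, ending with $x+1$, and having $\ell'-1$ descents (you can check this on all six trees with $4$ vertices). To reach a ``starting with $2$'' formulation one must then compose with a value-rotation bijection, which sends ``ends with $x+1$'' to ``ends with $x+2$'' when $x+1 < n$, and to ``ends with $1$'' with one extra descent when $x+1 = n$. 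Carrying out that composition, and separately handling trees whose root has degree $1$ and the case $x=1$, is the real content hiding behind the theorem; it is precisely what your three-point matching skips over.
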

We will later be extending this theorem into other analogs for both $r$-descents and $r$-excedances.

In \cite{Wang} it is shown that the values $R(n,\ell,x)$ follow a recursion 
\[
R(n,\ell,x)=\sum_{i=\max(x,2)}^{N-2} R(n-1,\ell-1,i) + \sum_{i=1}^{\max(x-1,1)} R(n-1,\ell,i).
\]

Furthermore, if we define
\[
T(n,\ell)=\sum_{i=0}^{n-1} R(n,\ell,i),
\]
then $T(n,\ell)$ is the number of recursive trees with $n$ vertices and $\ell$ leaves. It is known that $T(n-1,\ell-2)$ equals twice the value of the $2$-Eulerian number $A^2(n,\ell)$. \footnote{The sequence of 2-Eulerian numbers is OEIS sequence A144696. Twice the 2-Eulerian numbers is OEIS sequence A120434 \cite{OEIS}.}. There also exists a closed formula for $T(n,\ell)$, due to \cite{Conger},
\[
T(n,\ell) = \sum_{j=0}^{\ell-2} (-1)^j\cdot (\ell -1 - j)\cdot \binom{n}{j} \cdot (\ell - j)^n.
\]

\section{Bijections Involving \texorpdfstring{$A_r(n,m,k)$}{A(n,m,k)} for Fixed \texorpdfstring{$r$}{r}}\label{sec:Bijections}

In this section, we expand the bijection between recursive trees and permutations found in \cite{Wang} into bijections involving $r$-excedances and $r$-descents for arbitrary $k$. Recall that $A(n,m,k)$ is the number of permutations $\sigma$ on $[n]$ with $m$ descents satisfying that $\sigma(n) = k$. 
\begin{theorem}\label{thm:k-exc-des}
For arbitrary positive integers $n,m,r$ and $k$, the number of permutations of $[n]$ ending with $k$ and with $m$ $r$-descents is equal to the number of permutations of $[n]$ ending with $k$ and with $m$ $r$-excedances.
\end{theorem}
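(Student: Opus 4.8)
The plan is to realize the equality by an explicit bijection, $\sigma \mapsto \phi^{-1}$, where $\phi$ denotes the Foata transform of $\sigma$. Three things must be checked: that this map fixes the last entry, that it converts $r$-descents into $r$-excedances, and that it restricts to a bijection between the two prescribed families of permutations.

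First I would sharpen the descent/anti-excedance correspondence recalled in Section~\ref{sec:Prelim} into an $r$-sensitive, gap-preserving statement. If $i<n$ is a descent of $\sigma$, then $\sigma(i+1)<\sigma(i)$ is not a left-to-right maximum, so $\sigma(i)$ and $\sigma(i+1)$ lie in the same Foata block and $\phi(\sigma(i))=\sigma(i+1)$; thus at position $j=\sigma(i)$ the gap $j-\phi(j)$ equals $\sigma(i)-\sigma(i+1)$. Since this matches descent positions of $\sigma$ bijectively with anti-excedance positions of $\phi$ while preserving the gap, it matches $r$-descents with $r$-anti-excedances for every $r$ at once. Inverting then trades anti-excedances for excedances with the same gap: if $\phi(j)\le j-r$ then $\tau=\phi^{-1}$ has $\tau(\phi(j))=j\ge \phi(j)+r$, an $r$-excedance at position $\phi(j)$. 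Composing, $\sigma\mapsto\phi^{-1}$ turns the $m$ $r$-descents of $\sigma$ into exactly $m$ $r$-excedances.

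The step I expect to be the crux --- and the only genuinely new point --- is that the last entry is preserved, i.e. $\phi^{-1}(n)=\sigma(n)$. Here the observation is that $n$, being the largest value, is always a left-to-right maximum and so begins a Foata block; as nothing after it can be larger, this block is the final one and extends to position $n$. Reading it as a cycle, the predecessor of $n$ is the block's last entry, namely $\sigma(n)$, so $\phi(\sigma(n))=n$, that is $\phi^{-1}(n)=\sigma(n)$. Hence the map sends a permutation ending in $k$ to one ending in $k$.

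Finally I would package these observations. As the Foata transform and inversion are bijections of $S_n$, so is $\Phi\colon\sigma\mapsto\phi^{-1}$, and the previous paragraphs show $\Phi$ carries every permutation ending in $k$ with $m$ $r$-descents to a permutation ending in $k$ with $m$ $r$-excedances. The same reasoning applied to the inverse map (which sends $\tau$ to the inverse Foata transform of $\tau^{-1}$, using that inversion returns $r$-excedances to $r$-anti-excedances and that this map also fixes the last entry) shows the target family is carried back into the source family. Therefore $\Phi$ restricts to a bijection between the two families, proving their cardinalities agree for all $n,m,r,k$.
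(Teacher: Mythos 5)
Your proposal is correct and is essentially the paper's own proof: the paper's bijection takes an excedance-counted permutation $\tau$, inverts it, and reads its cycles (largest element first) as one-line notation, which is exactly the inverse of your map $\sigma \mapsto (F(\sigma))^{-1}$, and both arguments rest on the same two observations --- that $n$ heads the final Foata block so the last entry is preserved, and that the correspondence $\phi(\sigma(i)) = \sigma(i+1)$ trades descents for (anti-)excedances with the same gap, hence works for every $r$ at once.
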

\begin{proof}
Let $D$ be the set of all permutations of $[n]$, ending with $i$, and with $\ell$ $r$-descents, and let $E$ be the set of all permutations of $[n]$, ending with $i$, and with $\ell$ $r$-excedances. We construct a bijection between the two sets.
Our bijection is as follows:
\begin{itemize}
    \item Take a permutation $\sigma \in E$. 
    \item Let $\sigma'$ be the inverse permutation of $\sigma$; i.e., $\sigma'(\sigma(i)) = i$ for all $1 \leq i \leq n$.
    \item Take a Foata transform that sends $\sigma' \rightarrow \phi$ for some permutation $\phi$ on $[n]$.
\end{itemize}

First, we claim that if $\sigma(n) = i$, then performing the bijection gets us a permutation $\phi$ satisfying $\phi(n) = i$. To prove this, note that $\sigma'(i) = n$. Now, when performing the Foata transform, we know that $i$ is in the same cycle as $n$. However, $n$ is the largest element, so it must be the first element of the last block, and therefore $i$, pointing to $n$, must be the last element of the last block. Thus, the blocks of the Foata transform must look like:
\[
(\ldots \mid \ldots \mid \ldots \mid \ldots \ldots \mid n  \ldots i),
\]
and thus, after removing the division between blocks, we have that $\phi(n) = i$.

Inversely, we claim that if $\phi(n) = i$, then performing the inverse bijection gets us a permutation $\sigma$ satisfying $\sigma(n) = i$. The proof is essentially the above argument written backward. Since $i$ is the last element, it must be part of the last block of the Foata transform, implying that $\sigma'(i) = n$. Thus, $\sigma(n) = i$, as expected.

Finally, we claim that there is a bijection between $r$-excedances in $\sigma$ and $r$-descents in $\phi$. Say that there is a $r$-excedance at location $x$ in $\sigma$; i.e., $\sigma(x) = y$ where $y \geq x + k$. Then, we have that $\sigma'(y) = x$. Thus, when writing out the cycle notation, $y$ and $x$ must be consecutive; i.e., there exists an index $j$ such that $\phi(j) = y$ and $\phi(j+1) = x$, implying a $r$-descent at index $j$.

Similarly, we can go backward. Say there is an index $j$ such that $\phi(j) = y$ and $\phi(j+1) = x$ with $y \geq x + k$. Clearly, $x$ and $y$ must be in the same block of the inverse Foata transform and must be consecutive terms. However, this means that $\sigma'(y) = x$, and therefore that $\sigma(x) = y$, as expected.
\end{proof}
\begin{example}
Below is an example with $n = 7, \ell = 2, k = 1, $ and $i = 3$. Thus, we need $\sigma$ to be a permutation on $[7]$, with $2$ descents and last digit $3$. Let us take the example permutation 
    \[
    \sigma =
\begin{pmatrix}
1 & 2 & 3 & 4 & 5 & 6 & 7 \\
6 & 2 & 1 & 4 & 5 & 7 & 3 \\
\end{pmatrix}.
\]
 Now, look at the inverse of the permutation, 
    \[
    \sigma' =
\begin{pmatrix}
1 & 2 & 3 & 4 & 5 & 6 & 7 \\
3 & 2 & 7 & 4 & 5 & 1 & 6 \\
\end{pmatrix}.
\]
Writing the above permutation in standard cycle notation to allow us to take a Foata transform gives
\[
\sigma' = (2)(4)(5)(7613).
\]
Now remove the brackets to create a new permutation, $\phi$. 
\[
\begin{pmatrix}
1 & 2 & 3 & 4 & 5 & 6 & 7 \\
2 & 4 & 5 & 7 & 6 & 1 & 3 \\
\end{pmatrix}.
\]
We can see that the last element is $3$ and that there are exactly two descents. Furthermore, there is a descent between $\phi^{-1}(7)$ and $\phi^{-1}(6)$, as $\sigma(6) = 7$, with another descent between $\phi^{-1}(6)$ and $\phi^{-1}(1)$, as $\sigma(1) = 6$ is the corresponding excedance.
\end{example}

Plugging in $r = 1$ into the above theorem gives us the following corollary:

\begin{corollary}
The generalized Eulerian number $A(n,m,k)$ is also the number of permutations of $[n]$ with $m$ exedances ending in $k$.
\end{corollary}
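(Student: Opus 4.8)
The plan is to obtain this statement as the $r = 1$ specialization of Theorem~\ref{thm:k-exc-des}, so essentially no new combinatorial work is required; the only content is checking that the two generalized notions collapse to their classical counterparts when $r = 1$. I would begin by unwinding the definitions. A $1$-descent at position $i < n$ is a position satisfying $\sigma(i) \geq \sigma(i+1) + 1$; since $\sigma$ takes integer values, this is equivalent to $\sigma(i) > \sigma(i+1)$, which is exactly the defining condition of an ordinary descent. Likewise, a $1$-excedance at position $i \leq n$ satisfies $\sigma(i) \geq i + 1$, equivalently $\sigma(i) > i$, which is precisely an ordinary excedance. Hence the notion of a $1$-descent coincides with that of a descent, and the notion of a $1$-excedance coincides with that of an excedance.

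Next I would recall that, by the definition given in the introduction, $A(n,m,k)$ is the number of permutations $\sigma$ of $[n]$ with exactly $m$ descents and $\sigma(n) = k$. By the identification above, this is the same as the number of permutations of $[n]$ with exactly $m$ $1$-descents ending in $k$. Applying Theorem~\ref{thm:k-exc-des} with $r = 1$, this count equals the number of permutations of $[n]$ with exactly $m$ $1$-excedances ending in $k$, which by the second identification is the number of permutations of $[n]$ with exactly $m$ excedances ending in $k$. Chaining these equalities yields the claim.

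Since the statement is a direct corollary, I do not anticipate a genuine obstacle; the only point requiring a moment's care is confirming that $r = 1$ is an admissible value in Theorem~\ref{thm:k-exc-des} (it is, as the theorem is stated for arbitrary positive integers $r$) and that the inequality definitions indeed reduce to strict inequalities for integer-valued permutations. The underlying bijection—inverting $\sigma$ and applying the Foata transform—does all the real work in the theorem, and the corollary merely reads off the $r = 1$ case in the classical language of descents and excedances.
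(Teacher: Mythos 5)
Your proposal is correct and matches the paper exactly: the paper derives this corollary by plugging $r = 1$ into Theorem~\ref{thm:k-exc-des}, which is precisely your argument. Your extra care in checking that $1$-descents and $1$-excedances coincide with ordinary descents and excedances (since $\sigma(i) \geq \sigma(i+1) + 1$ is equivalent to $\sigma(i) > \sigma(i+1)$ for integers) is a fine, if routine, addition that the paper leaves implicit.
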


\begin{theorem}\label{thm:samerequality}
    For any arbitrary positive integers $n,m,r$ and $k_1,k_2 \leq r$, we have that $A_r (n,m,k_1) = A_r(n,m,k_2)$.
\end{theorem}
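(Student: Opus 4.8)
The plan is to avoid working with $r$-descents directly and instead pass to $r$-excedances via Theorem~\ref{thm:k-exc-des}. By that theorem, $A_r(n,m,k)$ equals the number of permutations of $[n]$ ending in $k$ with exactly $m$ $r$-excedances. The advantage of this reformulation is that the $r$-excedance condition $\sigma(i) \ge i + r$ at a position $i$ depends only on the value $\sigma(i)$ and the index $i$, never on any neighboring entry. In particular, whenever $\sigma(i) \le r$ there can be no $r$-excedance at $i$, since $\sigma(i) \le r < i + r$ for every $i \ge 1$. Thus any value that is at most $r$ is completely inert: it never contributes to the $r$-excedance count, no matter where it is placed.

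With this observation in hand, I would define the map $\Phi \colon \sigma \mapsto (k_1\,k_2)\circ\sigma$, where $(k_1\,k_2)$ is the transposition of the two values $k_1$ and $k_2$; that is, $\Phi$ replaces every occurrence of the value $k_1$ by $k_2$ and vice versa, leaving all other values fixed. Since $k_1, k_2 \le r$ and $k_1 \ne k_2$, if $\sigma(n) = k_1$ then $\Phi(\sigma)(n) = k_2$, so $\Phi$ carries permutations ending in $k_1$ to permutations ending in $k_2$. As $\Phi$ is its own inverse, it is a bijection between these two sets of permutations.

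It then remains to check that $\Phi$ preserves the number of $r$-excedances. The permutations $\sigma$ and $\Phi(\sigma)$ agree at every position except the two positions holding the values $k_1$ and $k_2$. At each such position $i$, both the old entry and the new entry are at most $r$, hence by the inert-values observation neither is an $r$-excedance; at all remaining positions the entries coincide. Therefore $\sigma$ and $\Phi(\sigma)$ have exactly the same $r$-excedances, and in particular the same number $m$ of them. Hence $\Phi$ restricts to a bijection between permutations of $[n]$ ending in $k_1$ with $m$ $r$-excedances and those ending in $k_2$ with $m$ $r$-excedances, which yields $A_r(n,m,k_1) = A_r(n,m,k_2)$.

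Because the excedance formulation does all the work, the one genuine subtlety is the reduction itself: I expect the main point to be recognizing that the analogous value-transposition does \emph{not} succeed directly for $r$-descents. There, a value $k_j \le r$ sitting on the right of a comparison (as $\sigma(i+1)$) sets the threshold $\sigma(i) \ge k_j + r$, so swapping $k_1 \leftrightarrow k_2$ can create or destroy a descent when some $\sigma(i)$ lies between $k_1 + r$ and $k_2 + r$. Passing to $r$-excedances eliminates this right-hand dependence entirely, which is exactly what makes the inert-values argument go through cleanly.
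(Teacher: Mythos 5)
Your proof is correct, and it takes a genuinely different route from the paper. The paper works directly with $r$-descents: it views the permutation cyclically and rotates it so that $k_2$ rather than $k_1$ becomes the last entry, observing that since $k_1, k_2 \leq r$, a value $\leq r$ can never be the left endpoint of an $r$-descent (that would force $\sigma(i+1) \leq \sigma(i) - r \leq 0$), so cutting the circular word after $k_1$ or after $k_2$ loses no descents and the rotation preserves the descent count. Your argument instead invokes Theorem~\ref{thm:k-exc-des} to translate everything into $r$-excedances, where values $\leq r$ are inert for the dual reason ($\sigma(i) \leq r < i + r$), and then the bijection is just the value transposition $(k_1\,k_2) \circ \sigma$, which is an involution touching only two positions. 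Both proofs ultimately rest on the same phenomenon --- values at most $r$ cannot participate in the statistic --- but instantiated for different statistics. The paper's rotation argument is self-contained and does not need the Foata-transform machinery behind Theorem~\ref{thm:k-exc-des}; yours buys a shorter and arguably more transparent bijection at the cost of that dependency (which is logically sound here, since Theorem~\ref{thm:k-exc-des} is established earlier and independently). Your closing remark is also accurate: the naive value swap fails for $r$-descents precisely because the entry $\sigma(i+1)$ sets the threshold $\sigma(i) \geq \sigma(i+1) + r$, e.g.\ with $r=2$, swapping the values $1$ and $2$ in $3\,1\,4\,2$ yields $3\,2\,4\,1$ and changes the number of $2$-descents from two to one; this is exactly the obstruction the paper's rotation (and your excedance reduction) is designed to avoid.
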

\begin{proof}
We construct a bijection between permutations counted by $A_r(n,m,k_1)$ and $A_r(n,m,k_2)$. Take some permutation $\sigma$ with $\sigma(n) = k_1$, and with exactly $m$ total $r$-descents. Say that $\sigma(q) = k_2$. Then, construct a new permutation $\phi$ satisfying that
\[
\phi(i) = \sigma(q+i)
\]
for integers $1 \leq i \leq n$, where $\phi(n+i) = \phi(i)$ for all $i \geq 1$. In other words, $\phi$ is a rotation of $\sigma$ so that $\phi(n) = k_2$. Notice that since $k_1,k_2 \leq r$, there cannot be a $r$-descent after $k_1$ in $\phi$ or $k_2$ in $\sigma$. Thus, if there is an $r$-descent at location $i$ in $\sigma$ (i.e., $\sigma(i) - \sigma(i+1) \geq r$), then there is a corresponding $r$-descent at location $i+n-q$ in $\phi$, so $\phi \in A_r(n,m,k_2)$. Since our construction is reversible, it is a bijection, and we are done.
\end{proof}

\section{Bijections Involving \texorpdfstring{$A_r(n,m,k)$}{A(n,m,k)} for Varied \texorpdfstring{$r$}{r}}\label{sec:Bijections2}
In this section, we derive other facts about the numbers $A_r(n,m,k)$. We will also denote the number of permutations satisfying some condition below a symbol. For instance, $\stackbin[\sigma(1) = 1]{}{A(n,m,k)}$ would be equal to the number of permutations $\sigma$ on $[n]$ with $m$ descents ending in $k$ satisfying that $\sigma(1) = 1$. 
\begin{theorem}\label{thm:kequalsone}
For all arbitrary positive integers $n,m,r$, we have:
\[
A_{r+1} (n,m,1) = A_r(n,m,n).
\]
\end{theorem}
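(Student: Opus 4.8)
The plan is to abandon descent-language entirely and argue in terms of excedances, using Theorem~\ref{thm:k-exc-des} to convert both sides; in the excedance world the awkward increment from $r$ to $r+1$ becomes transparent. Applying Theorem~\ref{thm:k-exc-des} with last element $n$ and parameter $r$, the quantity $A_r(n,m,n)$ equals the number of permutations of $[n]$ ending in $n$ with exactly $m$ $r$-excedances; applying it with last element $1$ and parameter $r+1$, the quantity $A_{r+1}(n,m,1)$ equals the number of permutations of $[n]$ ending in $1$ with exactly $m$ $(r+1)$-excedances. It therefore suffices to show these two excedance-counting families have the same size, and I claim both are counted by a single auxiliary quantity: the number of permutations of $[n-1]$ with exactly $m$ $r$-excedances.

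First I would treat the permutations $\sigma$ of $[n]$ with $\sigma(n)=n$. Since $n < n+r$ for $r \geq 1$, position $n$ is never an $r$-excedance, so every $r$-excedance lies among positions $1,\ldots,n-1$; and because $\sigma(n)=n$, the word $\pi = \sigma(1)\cdots\sigma(n-1)$ is a permutation of $[n-1]$ with $\sigma(i) \geq i+r$ if and only if $\pi(i) \geq i+r$. Hence these permutations biject with permutations of $[n-1]$ having $m$ $r$-excedances. Next I would treat the permutations $\tau$ of $[n]$ with $\tau(n)=1$. Again position $n$ is never an $(r+1)$-excedance, while the first $n-1$ entries are exactly the values $\{2,\ldots,n\}$; setting $\rho(i)=\tau(i)-1$ for $i \leq n-1$ yields a permutation of $[n-1]$ for which $\tau(i) \geq i+(r+1)$ holds if and only if $\rho(i) \geq i+r$. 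Thus these permutations also biject with permutations of $[n-1]$ having $m$ $r$-excedances, and the two counts agree, proving the identity.

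The main obstacle here is conceptual rather than computational: one must see that passing to excedances is the right move, since the unit increment from $r$ to $r+1$ is precisely absorbed by the shift of the value set from $\{1,\ldots,n-1\}$ to $\{2,\ldots,n\}$ that is forced by the two different last elements. Once this translation is in place the two bookkeeping steps are routine, the only points to verify being that the last position contributes no excedance when $r \geq 1$ and that a uniform shift of all values by $1$ turns an $(r+1)$-excedance into an $r$-excedance. I would also take care to confirm that Theorem~\ref{thm:k-exc-des} is invoked with the correct parameter on each side ($r$ for the permutations ending in $n$, but $r+1$ for those ending in $1$), since it is exactly this asymmetry in the parameter that makes the identity nontrivial.
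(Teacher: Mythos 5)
Your proposal is correct and is essentially the paper's own argument: both convert the two sides to excedance counts via Theorem~\ref{thm:k-exc-des} and then apply the shift-by-one map (subtract $1$ from every entry except the final one, which becomes $n$). Your only difference is presentational—you factor this bijection through the auxiliary set of permutations of $[n-1]$ with $m$ $r$-excedances, whereas the paper writes the composite map directly.
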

\begin{proof}
We use Theorem~\ref{thm:k-exc-des} and let $E$ and $E'$ be the set of permutations on $[n]$ with $m$ total $(r+1)$-excedances ending in $1$, and the set of permutations on $[n]$ with $m$ total $r$-descents ending in $n$, respectively. In particular, $\abs{E} = A_{r+1} (n,m,1)$ and $\abs{E'} = A_r(n,m,n)$. We construct a bijection between $E$ and $E'$. Let $\sigma$ be any permutation in $E$. Then, define $\phi$ to be a permutation on $[n]$ satisfying that for all $1 \leq i \leq n$, 
\[
\phi(i) = 
\begin{cases}
n & i = n \\
\sigma(i) - 1 & \text{otherwise} 
\end{cases}
\]
It is clear that $\phi(n) = n$. Furthermore, there is a $(r+1)$-excedance at location $i$ in $\sigma$ if and only if $\phi(i) = \sigma(i) - 1 \geq i + r$, so $\sigma$ and $\phi$ have the same number of excedances. Thus, $\phi \in E'$, and since the algorithm that we applied is a bijection that sends $E$ to $E'$ it must be the case that $\abs{E} = \abs{E'}$.
\end{proof}
Now, we prove a theorem that allows us to decrease $r$ by one when $k > 1$:
\begin{theorem}\label{thm:variedr}
For all integers $n,m,k$ with $k \geq 2$, we have the equality:
\[
\begin{aligned}
A_{r+1} (n,m,k) &= A_r (n,m+1,k-1) \\
&+ (r-1)\left( A_r(n-1, m, k-1) - A_r(n-1, m+1, k-1)\right).
\end{aligned}
\]
\end{theorem}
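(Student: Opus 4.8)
The plan is to pass to the excedance picture via Theorem~\ref{thm:k-exc-des} and then build an almost-bijection from a cyclic shift of values, correcting precisely for the single place where it fails. First I would rewrite both sides using excedances: $A_{r+1}(n,m,k)$ counts permutations of $[n]$ with $m$ $(r+1)$-excedances ending in $k$, and each $A_r(\cdots)$ counts permutations with the stated number of $r$-excedances ending in the stated value. The central device is the value-rotation $\rho$ sending $\sigma$ to $\tau$ with $\tau(i)=\sigma(i)-1$ whenever $\sigma(i)\ge 2$ and $\tau(i)=n$ when $\sigma(i)=1$. This is a bijection on $S_n$; it carries permutations ending in $k$ to permutations ending in $k-1$ (using $k\ge 2$); and at every position $i$ with $\sigma(i)\ge 2$ it converts an $(r+1)$-excedance into an $r$-excedance and back, since $\sigma(i)-1\ge i+r \iff \sigma(i)\ge i+r+1$. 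The only discrepancy sits at the value $1$: writing $i_0=\sigma^{-1}(1)=\tau^{-1}(n)$, the position $i_0$ is never an $(r+1)$-excedance of $\sigma$ but becomes an $r$-excedance of $\tau$ exactly when $i_0\le n-r$. Hence $\tau$ has $m+1$ $r$-excedances when $\tau^{-1}(n)\le n-r$ and $m$ otherwise.

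Next I would partition the permutations $\tau$ ending in $k-1$ by the position of the value $n$. Let $X_j$ count those with $j$ $r$-excedances and $\tau^{-1}(n)\le n-r$, and $Y_j$ those with $j$ $r$-excedances and $\tau^{-1}(n)\in\{n-r+1,\dots,n-1\}$ (note $\tau^{-1}(n)\ne n$, as $\tau(n)=k-1<n$), so that $A_r(n,j,k-1)=X_j+Y_j$. Transporting the count of $A_{r+1}(n,m,k)$ through the bijection $\rho$ together with the excedance bookkeeping above gives $A_{r+1}(n,m,k)=X_{m+1}+Y_m$, whereas $A_r(n,m+1,k-1)=X_{m+1}+Y_{m+1}$. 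Subtracting, the claim reduces to $Y_m-Y_{m+1}=(r-1)\bigl(A_r(n-1,m,k-1)-A_r(n-1,m+1,k-1)\bigr)$, so it suffices to prove $Y_j=(r-1)\,A_r(n-1,j,k-1)$ for every $j$.

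Finally I would evaluate $Y_j$ by deleting the value $n$. For each of the $r-1$ admissible positions $p\in\{n-r+1,\dots,n-1\}$ the entry $\tau(p)=n$ satisfies $n<p+r$, so it is not an $r$-excedance; removing it and relabelling positions yields a permutation $\tau'$ of $[n-1]$ still ending in $k-1$. The key computation is that this deletion preserves the number of $r$-excedances: every position to the right of $p$ lies in $\{n-r+2,\dots,n\}$, which is too close to the end to meet the excedance threshold either before deletion (where it needs value $\ge j+r$) or after the unit index shift (where it needs value $\ge j+r-1$), both being $>n$. Thus for each fixed $p$ we get a bijection onto permutations of $[n-1]$ ending in $k-1$ with $j$ $r$-excedances, giving $Y_j=(r-1)\,A_r(n-1,j,k-1)$ and hence the stated identity.

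I expect the main obstacle to be this final preservation claim: one must verify carefully that for $p\ge n-r+1$ none of the right-hand positions can be an $r$-excedance before or after deletion, and that inserting $n$ back reverses the process, so the deletion is genuinely a bijection rather than merely a surjection. A secondary point requiring care is the rotation step, where one must confirm that the solitary correction $\pm1$ occurs exactly at $\tau^{-1}(n)$ and is governed precisely by the threshold $n-r$. As sanity checks I would confirm the degenerate cases $r=1$ (where $Y_j=0$ and the correction term vanishes, recovering the clean identity $A_2(n,m,k)=A(n,m+1,k-1)$) and $k=n$, ensuring the index ranges remain valid.
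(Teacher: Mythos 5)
Your proposal is correct and follows essentially the same route as the paper: the paper's second lemma is exactly your value-rotation $\rho$ (send $\sigma(i)\mapsto\sigma(i)-1$ with $1\mapsto n$), its first lemma is exactly your deletion-of-$n$ argument for the $r-1$ positions in $\{n-r+1,\dots,n-1\}$ giving the factor $(r-1)A_r(n-1,j,k-1)$, and the final algebra is the same decomposition by the position of $n$. If anything, your $X_j,Y_j$ bookkeeping is cleaner and gets the case split right (the correction $+1$ occurs when $\tau^{-1}(n)\le n-r$), whereas the paper's write-up of its second lemma swaps the two cases and has $n$ versus $n-1$ typos that your version silently repairs.
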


\begin{lemma*}
If $k < n$, we have that 
\[
\stackbin[\sigma^{-1}(n) > n-r]{}{A_r(n,m,k)} = (r-1)A_r(n-1, m,k).
\]
\end{lemma*}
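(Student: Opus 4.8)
The plan is to pass from descents to \emph{excedances} and then delete the largest entry. By Theorem~\ref{thm:k-exc-des}, $A_r(n,m,k)$ equally counts permutations of $[n]$ ending in $k$ with exactly $m$ $r$-excedances, and the restricted quantity on the left likewise counts such excedance-permutations subject to $\sigma^{-1}(n) > n-r$. The advantage of the excedance picture is that the side condition becomes transparent: since $n$ is the maximal value, the slot $\sigma^{-1}(n)$ is an $r$-excedance precisely when $\sigma^{-1}(n) \le n-r$, so the hypothesis $\sigma^{-1}(n) > n-r$ says exactly that the entry $n$ does \emph{not} sit at an $r$-excedance. Because $\sigma(n)=k<n$, the entry $n$ then occupies one of the $r-1$ positions $n-r+1,\dots,n-1$.

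With this in hand I would build the bijection by deletion. The forward map removes the entry $n$ together with its slot, sliding the entries to its right one step to the left, producing a word $\tau$ on the values $[n-1]$ of length $n-1$; I would also record the original position $p=\sigma^{-1}(n)$. Since $\sigma(n)=k$ slides into the final slot, $\tau$ ends in $k$, so $\tau$ is a permutation of $[n-1]$ ending in $k$. The inverse map reinserts $n$ at position $p$. As $p$ ranges over the $r-1$ admissible slots and $\tau$ ranges over $A_r(n-1,m,k)$, the pair $(p,\tau)$ reconstructs $\sigma$ uniquely; this is the source of the factor $r-1$, so everything reduces to showing that deletion preserves the number of $r$-excedances.

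The main obstacle, and really the only content, is this excedance-count invariance, since sliding entries leftward shifts positions and \emph{a priori} alters excedance status. Here the hypothesis $p>n-r$ does all the work. First, $n$ itself contributes no $r$-excedance, by the reading of the side condition above. Second, every entry lying to the right of $n$ occupies a position $j\ge n-r+2$, and since its value is at most $n-1$, the inequality $\sigma(j)\ge j+r$ fails before deletion while $\sigma(j)\ge (j-1)+r$ fails after it, both by the crude bound $(j-1)+r\ge n+1>n-1$; thus these entries are non-$r$-excedances both before and after the shift. Finally, every entry to the left of $n$ keeps its position and value, hence its status. So the total number of $r$-excedances is unchanged, $\tau$ lands in $A_r(n-1,m,k)$, and the same bounds show that reinsertion is well defined and inverse to deletion. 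Summing over the $r-1$ choices of $p$ yields the claimed identity.
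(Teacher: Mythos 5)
Your proof is correct and is essentially the paper's own argument: the paper likewise works in the excedance picture, deletes the entry $n$ from its position $i \in \{n-r+1,\dots,n-1\}$ while sliding later entries left, and uses the same bound (excedances can only occur at positions $\le n-r-1$, while positions at or beyond the deleted slot are too large to be excedances before or after the shift) to show the count is preserved, summing over the $r-1$ possible positions. The only cosmetic difference is that the paper phrases this as the per-position identity $\stackbin[\sigma^{-1}(n) = i]{}{A_r(n,m,k)} = A_r(n-1,m,k)$ for each fixed $i$, whereas you record the position $p$ as part of the bijection's data.
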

\begin{proof}
In fact, we prove the slightly stronger statement that 
\[
\stackbin[\sigma^{-1}(n) = i]{}{A_r(n,m,k)} = A_r(n-1,m,k),
\]
for all $n > i > n-r$. Let $S$ be the set of all permutations $\sigma$ on $[n]$ with $m$ total $r$-excedances and last digit $k$, such that $\sigma^{-1}(n) = i$ for a fixed $i > n-r$. (Thus, $\abs{S} = \stackbin[\sigma^{-1}(n) = i]{}{A_r(n,m,k)}$). Then, let $T$ be the set of all permutations on $[n-1]$ with $m$ total $r$-descents and last digit $k$. From a given permutation $\sigma \in S$ we construct a unique new permutation $\phi \in T$:
\[
\phi(j) = 
\begin{cases}
\sigma(j) & j < i \\
\sigma(j+1) & j \geq i. 
\end{cases}
\]
It is clear to see that as the only term skipped is $\sigma(i) = n$, then $\phi$ is a permutation on $[n-1]$. This algorithm is also clearly invertible, so we only need to show that $\phi \in T$. We have that $\phi(n-1) = \sigma(n) = k$. Finally, if there is an excedance at location $j$ in $\sigma$ (i.e., $\sigma(j) \geq j+r$), then we must have $j \leq n-r$. In fact, $j < n - r$ because $j \neq i \Rightarrow \sigma(j) \neq n$. Thus, there must also be a corresponding excedance at location $j$ in $\phi$, so $\phi$ has $m$ total $r$-excedances, which completes the lemma.
\end{proof}
We also have the following lemma:
\begin{lemma*}
For $1 < k < n$, we have that:
\[
A_{r+1}(n,m,k) = \stackbin[\sigma^{-1}(n) > n-r]{}{A_r(n-1,m,k+1)} + \stackbin[\sigma^{-1}(n) \leq n-r]{}{A_r(n-1,m+1,k-1)}.
\]
\end{lemma*}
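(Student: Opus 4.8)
The plan is to recast every count in terms of $r$-excedances rather than $r$-descents, which is legitimate by Theorem~\ref{thm:k-exc-des}, and then to exhibit a single bijection on $S_n$ that turns $(r+1)$-excedances into $r$-excedances while decrementing the last entry. Concretely, given a permutation $\sigma$ on $[n]$ with exactly $m$ $(r+1)$-excedances and $\sigma(n)=k$, I would set $\phi = c\circ\sigma$, where $c$ is the cyclic decrement of \emph{values}, namely $c(v)=v-1$ for $v\geq 2$ and $c(1)=n$. This $c$ is a bijection of $[n]$, so $\sigma\mapsto\phi$ is a bijection of $S_n$, and since $1<k<n$ forces $\sigma(n)=k\geq 2$ we get $\phi(n)=k-1$ directly. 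Thus the image lands in a family ending in $k-1$, and the identity I would actually establish is the version living on $[n]$,
\[
A_{r+1}(n,m,k) = \stackbin[\sigma^{-1}(n) > n-r]{}{A_r(n,m,k-1)} + \stackbin[\sigma^{-1}(n) \leq n-r]{}{A_r(n,m+1,k-1)},
\]
where, exactly as in the preceding lemma, the subscript statistic refers to the position of the largest value in the generic permutation of the family on the right.

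The heart of the argument is the excedance bookkeeping under $c$. At every position $i$ with $\sigma(i)\geq 2$ we have $\phi(i)=\sigma(i)-1$, so $i$ is an $r$-excedance of $\phi$ (i.e.\ $\sigma(i)-1\geq i+r$) exactly when it is an $(r+1)$-excedance of $\sigma$ (i.e.\ $\sigma(i)\geq i+r+1$); these positions correspond one-for-one. The only position treated differently is $j:=\sigma^{-1}(1)$, where $c$ sends the value $1$ to $n$: here $\sigma(j)=1$ is never an $(r+1)$-excedance, while $\phi(j)=n$ is an $r$-excedance of $\phi$ precisely when $n\geq j+r$, that is when $j\leq n-r$. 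Crucially $j=\sigma^{-1}(1)=\phi^{-1}(n)$, so the count of $r$-excedances of $\phi$ equals $m$ when $\phi^{-1}(n)>n-r$ and $m+1$ when $\phi^{-1}(n)\leq n-r$.

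From here I would partition the image family by the position of $n$ in $\phi$, which is the statistic written in the subscripts and the same one governing the earlier lemma. The images $\phi$ with $\phi^{-1}(n)>n-r$ are exactly the permutations counted by $\stackbin[\sigma^{-1}(n) > n-r]{}{A_r(n,m,k-1)}$, and those with $\phi^{-1}(n)\leq n-r$ are exactly the permutations counted by $\stackbin[\sigma^{-1}(n) \leq n-r]{}{A_r(n,m+1,k-1)}$; because $\sigma\mapsto\phi$ is a bijection of $S_n$ (its inverse being $\phi\mapsto c^{-1}\circ\phi$), summing the two pieces recovers $A_{r+1}(n,m,k)$. Feeding this into the previous lemma, which evaluates each $\sigma^{-1}(n)>n-r$ restriction as $(r-1)$ times an $A_r(n-1,\cdot,\cdot)$ term, then collapses the right-hand side into Theorem~\ref{thm:variedr}.

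The step I expect to be most delicate is pinning down the excedance behavior at the wrap-around position and getting the threshold exactly right. One must verify that no \emph{ordinary} position can slip across the $r$- versus $(r+1)$-excedance boundary under $c$ — only the value-$1$ position can gain an excedance — and that the cutoff is $n-r$ rather than $n-r\pm 1$, since the entire split between the $m$ and the $m+1$ terms, and hence the shape of the theorem, hinges on this inequality. I would also confirm that the hypotheses $1<k<n$ are precisely what is needed: $k\geq 2$ guarantees $\phi(n)=k-1$, and $k<n$ ensures the value $1$ does not occupy the last position, so that the decrement and its inverse are genuinely well defined on the relevant families.
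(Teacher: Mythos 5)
Your proposal is correct and is essentially the paper's own argument: the paper proves the lemma with exactly your map ($\phi(j)=n$ when $\sigma(j)=1$ and $\phi(j)=\sigma(j)-1$ otherwise), refines it by the statistic $i=\sigma^{-1}(1)=\phi^{-1}(n)$, and performs the same wrap-around analysis with threshold $n-r$. Note that your case bookkeeping (the image gains an $r$-excedance, hence lands in the $m+1$ family, exactly when $\phi^{-1}(n)\leq n-r$) is the internally consistent version: the paper's displayed ``stronger claim'' swaps the $m$ and $m+1$ cases, contradicting its own subsequent sentence that an excedance appears at $\phi(i)$ precisely when $i\leq n-r$, so your write-up silently corrects that slip (and the garbled $n-1$, $k+1$ arguments in the lemma's statement) rather than taking a different route.
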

\begin{proof}
We prove the slightly stronger claim that 
\[
\stackbin[\sigma^{-1}(1) = i]{}{A_{r+1}(n,m,k)} = 
\begin{cases}
\stackbin[\sigma^{-1}(n) = i]{}{A_{r}(n,m,k-1)} & i \leq n - r \\
\stackbin[\sigma^{-1}(n) = i]{}{A_{r}(n,m+1,k-1)} & i > n - r.
\end{cases}
\]
Summing over all values $i$ completes the proof. To prove this claim, we again let $S$ be the set of all permutations $\sigma$ on $[n]$ with $m$ total $r$-excedances satisfying $\sigma^{-1}(1) = i$, and let $T$ be the set of all permutations $\phi$ on $[n]$ with $m$ total $r$-excedances satisfying $\phi^{-1}(n) = i$, and construct a bijection between $S$ and $T$. We use the following bijection: for any $\sigma \in S$, we take
\[
\phi(j) = 
\begin{cases}
n & \sigma(j) = 1 \\
\sigma(j) - 1 & \text{otherwise} 
\end{cases}
\]
for every $1 \leq j \leq n$. It is clear that this is a bijection; now, we just need to show that $\phi \in T$. Clearly, $\phi$ is a permutation on $[n]$ with $\phi(n) = k-1$. Now, the key is that there is an $(r+1)$-descent at location $j$ in $\sigma$ if and only if there is an $r$-descent at location $j$ in $\phi$, except at location $i$. If $i \leq n-r$, then there is a excedance at $\phi(i)$ but not $\sigma(i)$ (as $\phi(i) = n$ and $\sigma(i)$ = n); if $i > n-r$, there is no excedance at either $\phi(i)$ or $\sigma(i)$.
\end{proof}
\begin{proof}[Proof of~\ref{thm:variedr}]
We can combine the above two lemmas and finish the proof with algebra:
\begin{align*}
A_{r+1}(n,m,k) &= \stackbin[\sigma^{-1}(n) \leq n-r]{}{A_r(n-1,m+1,k-1)} + \stackbin[\sigma^{-1}(n) > n-r]{}{A_r(n-1,m,k+1)} \\
&= A_r(n-1,m+1,k-1)  \\
&- \stackbin[\sigma^{-1}(n) > n-r]{}{A_r(n-1,m+1,k+1)}  + \stackbin[\sigma^{-1}(n) > n-r]{}{A_r(n-1,m,k+1)}  
\end{align*}
which is equal to the desired result after plugging in 
\[
\stackbin[\sigma^{-1}(n) > n-r]{}{A_r(n-1,m+1,k+1)} = (r-1)A_r(n-1, m+1, k-1)
\]
and 
\[
\stackbin[\sigma^{-1}(n) > n-r]{}{A_r(n-1,m,k+1)} = (r-1)A_r(n-1, m, k-1)
\]
by our first lemma.
\end{proof} 

Now, by plugging in $r = 1$ into Theorem~\ref{thm:variedr}, we find the following nice bijection between $1$-descents and $2$-descents:
\begin{corollary}\label{cor: 1-2-exc}
If $k < n$, the value $A(n,m,k)$ is equal to $A_2(n+1,m-1,k+1)$.
\end{corollary}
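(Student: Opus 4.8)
The plan is to deduce the corollary from Theorem~\ref{thm:variedr} by setting $r=1$, as the preceding sentence advertises. The decisive feature of that recursion at $r=1$ is that its prefactor $(r-1)$ vanishes, so the entire correction term drops out and the recursion collapses to the clean two-parameter shift
\[
A_2(N,M,K) = A(N,\,M+1,\,K-1),
\]
valid for every $K \ge 2$ (here $A_1 = A$). This is the only input from Theorem~\ref{thm:variedr} that I expect to need.

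Next I would apply this specialization directly to the right-hand side of the corollary, taking $(N,M,K) = (n+1,\,m-1,\,k+1)$. The hypothesis $k \ge 1$ forces $K = k+1 \ge 2$, so the shift is legitimate, and it rewrites the $[n+1]$-count as a single ordinary Eulerian number:
\[
A_2(n+1,\,m-1,\,k+1) = A(n+1,\,m,\,k).
\]
Thus the whole corollary reduces to a statement purely about ordinary Eulerian numbers, namely that $A(n+1,m,k) = A(n,m,k)$ whenever $k<n$. The hypothesis $k<n$ must be what powers this reduction, since it is exactly the condition that keeps $k$ a valid terminal value once the ground set changes size and forbids $k$ from coinciding with the new maximal symbol $n+1$.

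The \emph{main obstacle} is this last identity. To prove $A(n+1,m,k)=A(n,m,k)$ one wants a bijection passing between permutations of $[n]$ and of $[n+1]$, both ending in $k$ with exactly $m$ descents; the natural candidate inserts (respectively deletes) the maximal symbol $n+1$ somewhere among the first $n$ positions, which is permitted precisely because $k<n$ guarantees the last entry is not this maximum. The delicate point — and the crux of the argument — is that inserting a maximal element generically creates the descent immediately following it while possibly destroying a descent at the insertion seam, so the insertion position must be chosen so that the net change in the descent count is zero. Verifying that such a position can always be selected canonically, and that the resulting map is a genuine bijection preserving both the descent count and the terminal value $k$, is where essentially all of the difficulty lies; I would expect the constraint $k<n$ to be invoked repeatedly, and I would check the small cases $n=2,3$ first to pin down exactly how the size parameter is meant to propagate before committing to the general construction.
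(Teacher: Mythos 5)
Your algebra faithfully executes what the paper's one-line proof gestures at: setting $r=1$ in Theorem~\ref{thm:variedr} does kill the $(r-1)$ term and collapse the recursion to $A_2(N,M,K)=A(N,M+1,K-1)$ for $K\geq 2$, and substituting $(N,M,K)=(n+1,m-1,k+1)$ correctly shows the printed corollary is equivalent to $A(n+1,m,k)=A(n,m,k)$ for $k<n$. But that residual identity --- the ``main obstacle'' you propose to attack with an insertion/deletion bijection --- is false, so no canonical choice of insertion position can rescue the construction. Summed over $m$, the two sides count all permutations of $[n+1]$ ending in $k$ (there are $n!$) versus all permutations of $[n]$ ending in $k$ (there are $(n-1)!$), so the identity cannot hold for every $m$; concretely, $A(4,2,2)=4$ (the permutations $1432$, $3142$, $4132$, $4312$) while $A(3,2,2)=0$ (both permutations of $[3]$ ending in $2$, namely $132$ and $312$, have exactly one descent). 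The same data refutes the corollary as literally printed: $A(3,2,2)=0$ but $A_2(4,1,3)=4$ (namely $1423$, $2413$, $4123$, $4213$), even though $k=2<n=3$.

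What you have actually uncovered is a typo in the statement: the ``$n+1$'' should be ``$n$''. Re-indexing your own specialization with $(N,M,K)=(n,m-1,k+1)$ gives
\[
A(n,m,k)=A_2(n,\,m-1,\,k+1),
\]
which is legitimate exactly when $2\leq k+1\leq n$, i.e., when $k<n$ --- this is the true role of the hypothesis, not the insertion-of-a-maximum mechanism you conjectured. The corrected identity checks numerically ($A(4,2,2)=4=A_2(4,1,3)$ and $A(3,2,2)=0=A_2(3,1,3)$) and is the version the paper actually uses afterward: the subsequent corollaries convert $R(n,\ell,x)=A(n-1,\ell-1,i-1)$ into counts of permutations on $[n-1]$ (not $[n]$) with $\ell-2$ big descents, i.e., $A_2(n-1,\ell-2,i)$. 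So the intended proof is precisely your first step plus re-indexing, with no auxiliary lemma; your stated plan to test the small cases $n=2,3$ before committing was exactly the right instinct, and carrying it out at $(n,m,k)=(3,2,2)$ would have shown that the gap you correctly isolated is unbridgeable and that the statement itself, rather than your reduction, is what needs repair.
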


\subsection{Connections to Recursive Tree Numbers}
Here, we develop a few theorems regarding the relation of recursive tree numbers to the numbers $A(n,m,k)$.
\begin{theorem}
The value $R(n,\ell,i)$ is equal to the number of permutations on $[n-1]$ ending with $i - 1$ and with $\ell - 1$ descents. Thus, it is equal to $A(n-1,\ell-1, i-1)$.
\end{theorem}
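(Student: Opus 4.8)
The plan is to reduce the statement to a purely permutation-theoretic identity using the theorem of \cite{Wang} quoted above, and then to exhibit an explicit bijection. That theorem tells us that $R(n,\ell,i)$ equals the number of permutations $\pi \in S_n$ with $\pi(1) = 2$, $\pi(n) = i+1$, and exactly $\ell - 1$ descents. Since $A(n-1,\ell-1,i-1)$ is by definition the number of permutations in $S_{n-1}$ ending in $i-1$ with $\ell-1$ descents, it suffices to construct a bijection between these two families. When $i = 1$ both counts vanish under the quoted theorem (no permutation can both begin and end with $2$, and no permutation ends in $0$), so we may assume $i \geq 2$, whence $\pi(n) = i + 1 \geq 3$.

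First I would define the map. Given such a $\pi$, delete its first entry $\pi(1) = 2$; this leaves a word $w = \pi(2)\pi(3)\cdots\pi(n)$ whose letters are exactly the values $\{1,3,4,\ldots,n\}$. I then relabel these letters by the bijection $g \colon \{1,3,4,\ldots,n\} \to \{1,2,\ldots,n-1\}$ defined by $g(1) = n-1$ and $g(v) = v-2$ for $v \geq 3$; equivalently, $g$ is obtained by standardizing $w$ and then cyclically decrementing every value by one, so that the smallest letter $1$ is sent to the largest label $n-1$. The result $\tau = g(w)$ is a permutation in $S_{n-1}$, and since the last letter of $w$ is $\pi(n) = i+1 \geq 3$, we get $\tau(n-1) = g(i+1) = i-1$, as required. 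The map is manifestly invertible: from $\tau$ one recovers $w$ by applying $g^{-1}$ and then prepends the letter $2$.

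The main obstacle is verifying that $\tau$ has exactly $\ell - 1$ descents, because two distinct effects alter the descent set and must be shown to cancel. On the one hand, deleting the leading $2$ destroys the adjacency $(\,2,\pi(2)\,)$, which is a descent precisely when $\pi(2) = 1$. On the other hand, the relabeling $g$ is order-preserving on $\{3,\ldots,n\}$ (so it preserves every descent not involving the letter $1$), but it moves the letter $1$ from being the global minimum of $w$ to being the global maximum $n-1$ of $\tau$; this flips the status of each neighbor of $1$, turning a ``descent into $1$'' into an ascent and an ``ascent out of $1$'' into a descent. I would finish by a short case analysis on the position $p$ of the value $1$ in $\pi$: if $p = 2$ (so $\pi(2) = 1$), the deletion removes one descent while the right-neighbor flip creates one, for a net change of zero; if $2 < p < n$, the deletion removes no descent while the left-neighbor flip removes one and the right-neighbor flip creates one, again a net change of zero. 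The case $p = n$ cannot occur since $\pi(n) \geq 3$. In every case the descent count is preserved at $\ell - 1$, completing the bijection and hence the proof.
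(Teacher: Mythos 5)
Your proof is correct, and it takes a genuinely more self-contained route than the paper's. Both arguments begin identically, invoking the quoted theorem of \cite{Wang} to identify $R(n,\ell,i)$ with the number of permutations of $[n]$ starting with $2$, ending with $i+1$, and having $\ell-1$ descents. At that point the paper cites Theorem 10 of \cite{Conger} to trade ``starting with $2$, ending with $i+1$'' for ``starting with $1$, ending with $i$'' (a shift-by-one symmetry of descent counts with prescribed first and last entries), and then finishes by deleting the leading $1$ --- which is never involved in a descent --- and decrementing the remaining values. You compress these two steps into a single explicit bijection: delete the leading $2$, relabel by $g(1)=n-1$ and $g(v)=v-2$ for $v\geq 3$, and verify descent-invariance by hand. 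Your verification is sound: the only adjacencies whose status can change are the deleted pair $\left(2,\pi(2)\right)$ and the two neighbors of the letter $1$, and in each case ($p=2$ versus $2<p<n$, with $p=n$ impossible since $\pi(n)\geq 3$) the gains and losses cancel exactly; the endpoint computation $g(i+1)=i-1$ and the invertibility of the map are also correct. What the paper's route buys is brevity, at the cost of outsourcing precisely the delicate point (the cyclic-shift symmetry) to an external reference; what your route buys is a self-contained argument in which the cancellation mechanism is visible. One shared caveat: both proofs inherit the degenerate case $i=1$ from the quoted theorem of \cite{Wang}, under which both sides are forced to vanish, so your explicit disposal of that case is consistent with, and no weaker than, what the paper does implicitly.
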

\begin{proof}
Using the bijection between recursive trees and permutations, in \cite{Wang}, we have that $R(n,\ell, i)$ is equal to the number of permutations on $[n]$ starting with $2$ and ending with $i+1$ and with $\ell-1$ descents. By Theorem 10 of \cite{Conger}, this is equivalent to the number of permutations of $[n]$, starting with $1$, ending with $i$, and with $\ell - 1$ descents. For each of these permutations, we can remove the first element and decrease every other element by $1$, to get all permutations that end with $i - 1$ and contain $\ell - 1$ descents.
\end{proof}
\begin{corollary}
The value $R(n,\ell,x)$ is equal to the number of permutations on $[n-1]$ ending with $i-1$ and with $\ell-1$ excedances.
\end{corollary}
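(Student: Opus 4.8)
The plan is to chain together two results already in hand, so that no new combinatorial construction is needed. The theorem immediately preceding this corollary identifies $R(n,\ell,x)$ with $A(n-1,\ell-1,i-1)$, which by definition counts permutations of $[n-1]$ ending in $i-1$ with exactly $\ell-1$ \emph{descents}. The corollary asks for the same quantity phrased in terms of \emph{excedances}. Since a last-element–preserving bijection between descents and excedances is exactly what the $r=1$ specialization of Theorem~\ref{thm:k-exc-des} provides (recorded as the corollary to that theorem), the statement follows by composition.

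\textbf{Key steps.} First I would invoke the preceding theorem to write $R(n,\ell,x) = A(n-1,\ell-1,i-1)$, so that the problem reduces to a statement purely about permutations of $[n-1]$. Second, I would apply Theorem~\ref{thm:k-exc-des} with parameters $n \mapsto n-1$, $m \mapsto \ell-1$, $k \mapsto i-1$, and $r = 1$: this equates the number of permutations of $[n-1]$ ending in $i-1$ with $\ell-1$ descents to the number of permutations of $[n-1]$ ending in $i-1$ with $\ell-1$ excedances. Equivalently, one may cite the corollary to Theorem~\ref{thm:k-exc-des}, which states directly that $A(n-1,\ell-1,i-1)$ also counts permutations of $[n-1]$ with $\ell-1$ excedances ending in $i-1$. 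Combining the two displays yields the claim.

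\textbf{Main obstacle.} There is essentially no conceptual difficulty here; the only point requiring care is the bookkeeping of the index shifts. The preceding theorem shifts $n \to n-1$ and translates each of $\ell$ and $i$ down by one (reflecting the deletion of the first element and the decrement of all remaining values), and I must make sure the parameters fed into Theorem~\ref{thm:k-exc-des} match these shifted values exactly so that the final expression refers to permutations of $[n-1]$ ending in $i-1$ with $\ell-1$ excedances, as stated. Once the indices are aligned, the result is immediate.
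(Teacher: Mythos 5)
Your proposal is correct and is exactly the argument the paper intends: the corollary follows by chaining the preceding theorem's identification $R(n,\ell,x) = A(n-1,\ell-1,i-1)$ with the $r=1$ case of Theorem~\ref{thm:k-exc-des} (equivalently, its corollary that $A(n,m,k)$ counts permutations with $m$ excedances ending in $k$), with the parameter substitutions you describe. No difference in approach worth noting.
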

We can now cite Corollary~\ref{cor: 1-2-exc} to get some equivalent statements:
\begin{corollary}
The value $R(n,\ell,x)$ is equal to the number of permutations on $[n-1]$ ending with $i$ and with $\ell-2$ big excedances.
\end{corollary}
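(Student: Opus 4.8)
The plan is to obtain the corollary by composing three facts already in hand, while being careful to keep the ground set equal to $[n-1]$ at every step. I start from the identity
\[
R(n,\ell,i)=A(n-1,\ell-1,i-1),
\]
established above, whose right-hand side counts permutations of $[n-1]$ ending in $i-1$ with exactly $\ell-1$ descents. I then convert ordinary descents into big descents \emph{without} enlarging the ground set, and finally pass from big descents to big excedances using Theorem~\ref{thm:k-exc-des}. The target count is exactly permutations of $[n-1]$ ending in $i$ with $\ell-2$ big excedances, so these three moves suffice.

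The key step is the size-preserving form of the one-to-two descent relation, which I read off directly from Theorem~\ref{thm:variedr}. Setting $r=1$ there annihilates the correction term $(r-1)(\cdots)$ and leaves
\[
A_2(N,M,K)=A(N,M+1,K-1)\qquad(K\ge 2),
\]
in which the first argument $N$ is unchanged. Taking $N=n-1$, $M=\ell-2$, $K=i$ (so the hypothesis reads $i\ge 2$) gives
\[
A_2(n-1,\ell-2,i)=A(n-1,\ell-1,i-1)=R(n,\ell,i),
\]
and by definition $A_2(n-1,\ell-2,i)$ counts permutations of $[n-1]$ ending in $i$ with exactly $\ell-2$ big descents. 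Applying Theorem~\ref{thm:k-exc-des} with $r=2$, $n\mapsto n-1$, $m\mapsto \ell-2$, and $k\mapsto i$ then shows this equals the number of permutations of $[n-1]$ ending in $i$ with $\ell-2$ big excedances, which is the claim.

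The main obstacle is purely one of bookkeeping the first argument, and it is exactly where a naive approach fails. The conversion packaged as Corollary~\ref{cor: 1-2-exc} \emph{increments} the size of the ground set, sending a count on $[n-1]$ to a count on $[n]$; using it here would produce the number of permutations of $[n]$ (not $[n-1]$) ending in $i$ with $\ell-2$ big excedances. The statement as worded instead requires the size-preserving specialization of Theorem~\ref{thm:variedr} displayed above, where $N$ is held fixed. I would therefore justify carefully that this is the correct specialization (that the $r=1$ correction term genuinely vanishes and that the surviving term retains the first argument $N$), and I would verify the boundary hypotheses — $i\ge 2$ for Theorem~\ref{thm:variedr} and $i\le n-1$ for $A_2(n-1,\ell-2,i)$ to be well defined, both of which hold automatically for an endpoint $i$ of the smallest rooted path in an $n$-vertex recursive tree — so that each of the three invoked statements applies without exception.
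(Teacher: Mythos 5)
Your chain of reductions is correct and is in substance the route the paper intends --- first $R(n,\ell,i)=A(n-1,\ell-1,i-1)$, then a descent-to-big-descent conversion, then Theorem~\ref{thm:k-exc-des} with $r=2$ --- but your bookkeeping observation is not mere caution: you have diagnosed and repaired a genuine defect in the paper's own justification. The paper derives this corollary by citing Corollary~\ref{cor: 1-2-exc}, which as printed reads $A(n,m,k)=A_2(n+1,m-1,k+1)$; taken literally this converts a count on $[n-1]$ into a count on $[n]$, which is inconsistent with the corollary's claim about permutations of $[n-1]$, and is in fact false as an identity (for instance $A(3,2,1)=1$, counting only $321$, while $A_2(4,1,2)=4$, counting $1342$, $3412$, $4132$, $4312$). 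Your size-preserving specialization $A_2(N,M,K)=A(N,M+1,K-1)$ for $K\ge 2$, read directly off Theorem~\ref{thm:variedr} at $r=1$ where the $(r-1)$ correction term vanishes, is the correct statement (the first argument in Corollary~\ref{cor: 1-2-exc} should be $n$, not $n+1$), and your instantiation $N=n-1$, $M=\ell-2$, $K=i$ gives exactly $R(n,\ell,i)=A_2(n-1,\ell-2,i)$, after which Theorem~\ref{thm:k-exc-des} finishes. One can confirm numerically that the $[n-1]$ version you prove is the true one: $R(4,3,2)=1$ matches the single permutation $312$ of $[3]$ ending in $2$ with one big excedance, whereas permutations of $[4]$ ending in $2$ with one big excedance number four.

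One inaccuracy in your closing remarks: the hypothesis $i\ge 2$ does \emph{not} hold automatically for the endpoint of the smallest rooted path. In any recursive tree the parent of the vertex labeled $1$ must be the root, so the smallest rooted path always passes through $1$ and terminates there whenever $1$ is a leaf (e.g.\ the star on $n$ vertices, or the first column of the table of recursive trees on $4$ vertices); hence $R(n,\ell,1)$ is generally nonzero and the case $i=1$ must genuinely be excluded from the corollary. This boundary defect is inherited from the theorem you start from, whose right-hand side $A(n-1,\ell-1,i-1)$ is already degenerate at $i=1$ (no permutation of $[n-1]$ ends in $0$), so your argument covers precisely the cases where the paper's chain of results applies, but you should state the restriction $i\ge 2$ rather than claim it is automatic.
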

\begin{corollary}
The value $R(n,\ell,x)$ is equal to the number of permutations on $[n-1]$ ending with $i$ and with $\ell-2$ big descents.
\end{corollary}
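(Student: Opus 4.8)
The plan is to deduce this statement immediately from the preceding corollary together with the $r = 2$ specialization of Theorem~\ref{thm:k-exc-des}. The preceding corollary already identifies $R(n,\ell,x)$ with the number of permutations of $[n-1]$ that end in $i$ and have exactly $\ell - 2$ big excedances; since a big excedance is by definition a $2$-excedance, this is precisely a $2$-excedance count. The only change needed to reach the desired statement is to replace ``big excedances'' by ``big descents,'' and this is exactly the content of Theorem~\ref{thm:k-exc-des} in the case $r = 2$.

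Concretely, I would first restate the previous corollary so as to fix the quantity $R(n,\ell,x)$ as the number of permutations of $[n-1]$ ending in $i$ with $\ell - 2$ $2$-excedances. I would then apply Theorem~\ref{thm:k-exc-des} with the substitutions (length) $n \mapsto n-1$, (count) $m \mapsto \ell - 2$, $r \mapsto 2$, and (terminal value) $k \mapsto i$. That theorem guarantees that the number of permutations of $[n-1]$ ending in $i$ with $\ell - 2$ $2$-excedances equals the number of permutations of $[n-1]$ ending in $i$ with $\ell - 2$ $2$-descents, that is, big descents. Chaining the two equalities then yields the corollary. (An equivalent route would be to start from the theorem $R(n,\ell,i) = A(n-1,\ell-1,i-1)$ and push it through Corollary~\ref{cor: 1-2-exc}, whose right-hand side $A_2$ is by definition a big-descent count; I would present whichever path produces the cleaner parameter bookkeeping.)

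The main point to verify, rather than a genuine obstacle, is purely terminological: one must confirm that ``big'' corresponds to $r = 2$ for both descents and excedances, as fixed in Section~\ref{sec:Prelim}, and that the parameters $n-1$, $\ell - 2$, and $i$ are carried through the $r = 2$ specialization unchanged. All of the substantive combinatorial work, namely the Foata-transform bijection that sends $r$-excedances to $r$-descents while preserving the last element, was already carried out in the proof of Theorem~\ref{thm:k-exc-des}, so no new construction is required here.
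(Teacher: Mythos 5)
Your proposal is correct and matches the paper's intended derivation: the paper states this corollary without a separate proof, as an immediate consequence of the preceding corollary (big excedances) combined with the $r=2$ case of Theorem~\ref{thm:k-exc-des}, which is exactly your primary route; your alternative route through Corollary~\ref{cor: 1-2-exc} is likewise what the paper's surrounding text suggests. The parameter substitutions $n \mapsto n-1$, $m \mapsto \ell-2$, $r \mapsto 2$, $k \mapsto i$ are carried through correctly, so nothing further is needed.
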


\section{Generalized Eulerian Polynomials}\label{sec:Worpitzky}
In this section, we develop a way to prove the identity
\[
(x+1)^{n-k+1} x^{k-1} = \sum\limits_{i=0}^{n} A(n,i,k) \binom{x+k}{n-1}.
\]
This has been proven before using barred words \cite{Gessel}, but we use a different (albeit similar) combinatorial proof involving counting functions, generalizing an in \cite{Spivey}. Plugging in $k = 1$ gives Worpitzky's identity after using the fact that $A(n,m,1) = A(n-1,m-1)$. 
\begin{theorem}
The following generalization of Worpitzky's identity holds for integers $n \geq k$:
\[
(x+1)^{n-k+1} x^{k-1} = \sum\limits_{i=0}^{n} A(n,i,k) \binom{x+i}{n-1}.
\]
\end{theorem}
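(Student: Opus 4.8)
The plan is to generalize the function-counting proof that Spivey \cite{Spivey} gives for Worpitzky's identity to the refined setting in which the final letter of the permutation is pinned to the value $k$. The strategy is to build a single finite set $\mathcal{W}$ and compute $|\mathcal{W}|$ in two ways: a \emph{pattern} count, organized by the underlying permutation and its descents, which reproduces the right-hand side, and a \emph{direct} product count, which reproduces the left-hand side. The bridge between the two counts is the standardization map, exactly as in Spivey's argument, and the only genuinely new feature is how the pinned last letter is threaded through it.

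For the pattern count I would attach to each permutation $\sigma$ of $[n]$ with $\sigma(n)=k$ the family of integer value-sequences that are monotone along $\sigma$ in the standardization sense: reading positions $1,\ldots,n-1$ in the order prescribed by $\sigma$, one forces a strict increase at each ascent and permits equality at each descent, with all entries drawn from $\{1,\ldots,x\}$. The key lemma will be that, for a fixed pattern $\sigma$ with exactly $i$ descents, the number of such sequences is $\binom{x+i}{n-1}$: morally, the strictly increasing baseline contributes $\binom{x}{n-1}$ and each descent that licenses an equality raises the binomial top by one. Summing over all admissible $\sigma$, grouped by their descent number $i$, then yields $\sum_i A(n,i,k)\binom{x+i}{n-1}$, the right-hand side.

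For the direct count I would show that, as $\sigma$ ranges over all permutations ending in $k$, the disjoint union of these value-sequence families is in bijection with one product set whose cardinality factors over the \emph{values} $1,\ldots,n$ rather than over positions: the $k-1$ values below $k$ each contribute $x$ choices, the $n-k$ values above $k$ each contribute $x+1$ choices, and the pinned value $k$ contributes nothing, for a total of $(x+1)^{n-k}x^{k-1}$. This is the refined analog of the fact that in Spivey's argument every word has a unique pattern, and it is most transparent in the barred-word language of \cite{Gessel}. As sanity checks, $k=n$ removes all large values and collapses $\mathcal{W}$ to ordinary words in $\{1,\ldots,x\}^{n-1}$, recovering Worpitzky's identity on $n-1$ letters, while $k=1$ shifts every range up by one and recovers Worpitzky's identity evaluated at $x+1$; both specializations agree with the reduction $A(n,m,1)=A(n-1,m-1)$ recorded in the excerpt.

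The main obstacle is the bookkeeping at the final comparison of $\sigma(n-1)$ against the pinned value $k$. Deleting $k$ identifies $\sigma$ with a pattern on the other $n-1$ letters, but the length-$(n-1)$ value-sequence has only $n-2$ internal gaps, whereas $\sigma$ has $n-1$ comparisons; the descent coming from $\sigma(n-1)>k$ therefore cannot survive as an ordinary gap and must instead be absorbed into the value range, promoting $x$ to $x+1$ for exactly the $n-k$ large values. Checking that this one comparison is accounted for once, neither dropped nor double-counted, is precisely what makes the per-pattern count equal $\binom{x+i}{n-1}$ with $i$ the \emph{total} descent number, and what produces the product $(x+1)^{n-k}x^{k-1}$ rather than a spurious extra factor. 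I record that this accounting yields the exponent $n-k$ on $(x+1)$: the $n-k+1$ printed in the statement should read $n-k$, the value already forced by the $k=1$ specialization to Worpitzky's identity that the paper itself invokes, and it is this corrected form that the argument establishes.
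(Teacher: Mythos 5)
Your proposal is essentially the paper's own proof: a Spivey-style double count of functions $f:[n]\to\{0,1,\ldots,x\}$ with the value at $k$ pinned to the bottom of the range and the values below $k$ forbidden from it, partitioned via standardization into classes $S(\sigma)$ of size $\binom{x+i}{n-1}$ as $\sigma$ ranges over permutations ending in $k$ with $i$ descents---your ``absorb the last comparison into the value range'' device is exactly the paper's conditions $f(k)=0$ and $f(i)\neq 0$ for $i<k$. Your erratum is also correct: the direct count of these functions is $(x+1)^{n-k}x^{k-1}$, not the $(x+1)^{n-k+1}x^{k-1}$ asserted in both the theorem statement and the proof (your $k=1$ and $k=n$ specializations, and e.g.\ $n=2$, $k=1$, where the right side is $\binom{x+1}{1}=x+1$, confirm this), so the exponent $n-k+1$ in the paper is an off-by-one error and the identity your argument establishes is the corrected one.
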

\begin{proof}
Assume $x$ is a positive integer, which suffices for the proof as both sides are polynomials. We double-count the number of functions $f(i): [n] \rightarrow \{0,1,2,\ldots,x\}$, satisfying the following:
\begin{itemize}
    \item The value $f(k)$ is equal to $0$.
    \item For all $i < k$, $f(i)$ is not equal to $0$.
\end{itemize}
Clearly, there are exactly $(x+1)^{n-k+1} x^{k-1}$ valid functions. We now double count the functions. For any permutation $\sigma$ on $[n]$ satisfying that $\sigma(n) = k$ with $i$ descents, we associate a set $S(\sigma)$ that contains exactly $\binom{x+i}{n-1}$ of the above functions. In particular, a given function $f$ is in the set $S(\sigma)$ if and only if for all $1 \leq j \leq n-1$ we have
\[
\begin{cases}
f(\sigma(j)) \geq f(\sigma(j+1)) & \text{descent at }j\text{ in }\sigma \\
f(\sigma(j)) > f(\sigma(j+1)) & \text{otherwise.} \\
\end{cases}
\]
First, we show that $\abs{S}$ is equal to $\binom{x+i}{n-1}$. Let $a_j = f(\sigma(j-1)) - f(\sigma(j))$ for all $0 \leq j \leq n-1$, with $f(\sigma(0)) = x$. It is clear that 
\[
 a_1 + a_2 + \cdots + a_{n} = x
\]
and furthermore that for every $1 \leq j \leq n$,
\[
\begin{cases}
a_j \geq 0 & j = 1 \text{ or descent at }j-1 \\
a_j > 0 & \text{otherwise}.
\end{cases}
\]
It is also direct to show that $f(\sigma(j)) = a_1 + \ldots + a_j$ is a unique, valid, function, given some $a_i$ that satisfy the above conditions. However, since there are exactly $i + 1$ values of $1 \leq j \leq n$ that are either a descent or $0$ or $n$, the number of functions is equal to the number of solutions to
\[
a_1 + a_2 + \cdots + a_n = x + i + 1
\]
which is equal to $\binom{x+i}{n-1}$.

We will show that every function $f: [n] \rightarrow \{0, 1, 2, \ldots, x\}$ is counted in exactly one set $S(\sigma)$ for some permutation $\sigma$ on $[n]$. Choose such an arbitrary function $f$. Note that $f \in S(\sigma)$ is only possible if 
\[
f(\sigma(1)) \geq f(\sigma(2)) \geq \cdots \geq f(\sigma(n)).
\]
Now, we inductively construct our permutation $\sigma$. Say that $f$ has a unique maximum at $M$; i.e., $f(j) < f(M)$ for all $1 \leq j \leq n$ and $j \neq M$. It is clear that $\sigma(1) = M$. Say instead that $f$ has maximal set $S_M$ with at least two elements. This set is defined such that for any $a,b \in S_M$ we have $f(a) = f(b)$, and for any $a \in S_M, b \not\in S_M$, $f(a) > f(b)$. Clearly then, we must have 
\[
f(\sigma(1)) = f(\sigma(2)) = \cdots = f(\sigma(\abs{S_M})),
\]
and furthermore that $S_M = \{\sigma(1),\sigma(2),\ldots,\sigma(\abs{S_M})\}$. However, note that $f(\sigma(j)) = f(\sigma(j+1))$ is only possible if there is a descent at $j$ in $\sigma$, or that $\sigma(1) > \sigma(2) > \cdots > \sigma(S_M)$. This clearly fixes all values $\sigma(1), \sigma(2), \ldots, \sigma(M)$.

We can finish this argument by ``removing" all terms in $S_M$ (or the single maximum term $M$) to show that all values in $\sigma$ are fixed. 
\end{proof}

\section{Acknowledgments}
I am grateful to Tanya Khovanova for introducing me to Eulerian Numb to Ira Gessel for consulting on this project. We thank Rich Wang for his interest in this project. I am also grateful to the MIT PRIMES-USA program for creating such a rare and amazing math research opportunity.

\printbibliography
\end{document}